\newtheorem{thm}{Theorem}[section]
\newtheorem{corollary}[thm]{Corollary}
\newtheorem{lemma}[thm]{Lemma}
\newtheorem{conjecture}[thm]{Conjecture}
\theoremstyle{definition}
\newtheorem{definition}[thm]{Definition}
\newtheorem{example}[thm]{Example}
\theoremstyle{remark}
\newtheorem{remark}[thm]{Remark}
\DeclareMathOperator{\qf}{qf}
\DeclareMathOperator{\Ker}{Ker}
\DeclareMathOperator{\Spec}{Spec}
\DeclareMathOperator{\Max}{Max}
\DeclareMathOperator{\w.dim}{w.dim}
\DeclareMathOperator{\fd}{fd}
\newcommand{\field}[1]{\mathbb{#1}}
\newcommand{\R}{\field{R}}
\newcommand{\Z}{\field{Z}}
\def\1{{\rm (1)}}
\def\2{{\rm (2)}}
\def\3{{\rm (3)}}
\def\4{{\rm (4)}}
\def\5{{\rm (5)}}
\begin{document}

\title[Trivial extensions defined by Pr\"ufer conditions]{Trivial extensions defined by Pr\"ufer conditions}

\author{C. Bakkari}
\address{Department of Mathematics, Faculty of Sciences and Technology, P. O. Box 2202,
University S. M. Ben Abdellah, Fez 30000, Morocco} \email{cbakkari@hotmail.com}

\author{S. Kabbaj}
\address{Department of Mathematics, King Fahd University of Petroleum \& Minerals, P. O. Box 5046, Dhahran 31261, KSA}
\email{kabbaj@kfupm.edu.sa}
\thanks{The second author was supported by KFUPM under Research Project \# MS/RING/368. The third author would like to thank KFUPM for its hospitality}

\author{N. Mahdou}
\address{Department of Mathematics, Faculty of Sciences and Technology, P. O. Box 2202,
University S. M. Ben Abdellah, Fez 30000, Morocco} \email{mahdou@hotmail.com}

\date{\today}
\subjclass[2000]{13F05, 13B05, 13A15, 13D05, 13B25}
\keywords{Pr\"ufer domain, semihereditary ring, arithmetical ring, Gaussian ring, Pr\"ufer ring, weak global dimension, Gaussian polynomial, content ideal, trivial ring extension}

\begin{abstract}
This paper deals with well-known extensions of the Pr\"ufer domain concept to arbitrary commutative rings. We investigate the transfer of these notions in trivial ring extensions (also called idealizations) of commutative rings by modules and then generate original families of rings with zerodivisors subject to various Pr\"ufer conditions. The new examples give further evidence for the validity of Bazzoni-Glaz conjecture on the weak dimension of Gaussian rings. Moreover, trivial ring extensions allow us to widen the scope of validity of Kaplansky-Tsang conjecture on the content ideal of Gaussian polynomials.
\end{abstract}
\maketitle

\section{Introduction}

All rings considered in this paper are commutative with identity elements and all modules are
unital. In 1932, Pr\"ufer introduced and studied integral domains in which every non-zero finitely generated ideal is
invertible \cite{P}. In 1936, Krull \cite{K} named these rings after H. Pr\"ufer and stated
equivalent conditions that make a domain Pr\"ufer. Since then, ``Pr\"ufer domains have assumed a central role in the development of multiplicative
ideal theory through numeral equivalent forms. These touched on many areas of commutative algebra, e.g., valuation theory, arithmetic relations
on the set of ideals, $*$-operations, and polynomial rings; in addition to several homological characterizations" (Gilmer \cite{Gi}).

The extension of this concept to rings with zerodivisors gave rise to five classes of Pr\"ufer-like rings featuring some homological apsects (Bazzoni-Glaz \cite{BG} and Glaz \cite{G3}); namely: \1 Semihereditary ring, i.e., every finitely generated ideal is projective (Cartan-Eilenberg \cite{CE}); \2 Ring with weak dimension at most one (Glaz \cite{G1,G2}); \3 Arithmetical ring, i.e., every finitely generated ideal is locally principal (Fuchs \cite{Fu}); \4 Gaussian ring, i.e., $c(fg)=c(f)c(g)$ for any polynomials $f,g$ with coefficients in the ring, where $c(f)$ denotes the content of $f$ (Tsang \cite{T}); \5 Pr\"ufer ring, i.e., every finitely generated regular ideal is invertible or, equivalently, projective (Butts-Smith \cite{BS} and Griffin \cite{Gr}).

While, in the domain context, all these forms coincide with the definition of a Pr\"ufer domain, Glaz \cite{G3} provided
examples which show that all these notions are distinct in the context of arbitrary rings. The following diagram of implications
 summarizes the relations between them \cite{BG,BG2,G2,G3,LR,Lu,T}:

\begin{center}
Semihereditary $\Rightarrow$ weak dimension $\leq 1$ $\Rightarrow$ Arithmetical $\Rightarrow$ Gaussian $\Rightarrow$ Pr\"ufer
\end{center}

 It is notable that original examples -for each one of the above classes- are rare in the literature. This paper investigates the transfer of the above-mentioned Pr\"ufer conditions to trivial ring extensions. Our results generate new examples which enrich the current literature with new families of Pr\"ufer-like rings with zerodivisors. Particularly, we obtain further evidence for the validity of Bazzoni-Glaz conjecture sustaining that ``the weak dimension of a Gaussian ring is 0, 1, or $\infty$" \cite{BG2}. Moreover, trivial ring extensions offer the possibility to widen the scope of validity of the content conjecture of Kaplansky and Tsang which was extended to investigate rings where ``every Gaussian polynomial has locally principal content ideal" \cite{AK,BG,GV,HH,LR,Lu,T}. Notice that both conjectures share the common context of rings with zerodivisors. This very fact lies behind our motivation for studying the Gaussian condition and related concepts in trivial ring extensions.

Let $A$ be a ring and $E$ an $A$-module. The trivial ring extension of $A$ by $E$ (also called the idealization of $E$ over $A$) is the ring $R:= A\propto~E$ whose underlying group is $A \times E$ with multiplication given by $(a,e)(a',e') = (aa', ae'+a'e)$. For the reader's convenience, recall that if $I$ is an ideal of $A$ and $E'$ is a submodule of $E$ such that $IE \subseteq E'$, then $J := I \propto E'$ is an ideal of $R$; ideals of $R$ need not be of this form \cite[Example 2.5]{KM}. However, prime (resp., maximal) ideals of $R$ have the form  $p\propto E$, where $p$ is a prime (resp., maximal) ideal of $A$ \cite[Theorem 25.1(3)]{H}. Suitable background on commutative trivial ring extensions is \cite{G1,H}.

Section~\ref{sec:2} deals with trivial ring extensions of the form $R:=A \propto~B$, where $A \subseteq B$ is an extension of integral domains. Precisely, we examine the possible transfer of Pr\"ufer ring  conditions to $R$. The main result asserts that ``$R$ is Gaussian (resp., Arithmetical) if and only if $A$ is Pr\"ufer with $K\subseteq B$ (resp., $K=B$)." This generates new examples of non-arithmetical Gaussian rings as well as arithmetical rings with weak dimension strictly greater than one. Recall that classical examples of non-semihereditary arithmetical rings stem from Jensen's 1966 result \cite{J} as non-reduced principal rings, e.g., $\Z/n^{2}\Z$ for any integer $n\geq 2$. In this line, we provide a new family of examples of non-finite conductor arithmetical rings, hence quite far from being principal. We also establish a result on the weak dimension of these constructions which happens to corroborate Bazzoni-Glaz conjecture (cited above).

In their recent paper devoted to Gaussian properties, Bazzoni and Glaz have proved that a Pr\"ufer ring satisfies any of the other four Pr\"ufer
conditions if and only if its total ring of quotients satisfies that same condition \cite[Theorems 3.3 \& 3.6 \& 3.7 \& 3.12]{BG2}.
This fact narrows the scope of study to the class of total rings of quotients. Section~\ref{sec:3} investigates Pr\"ufer conditions in a special class of total rings of quotients; namely, those arising as trivial ring extensions of local rings by vector spaces over the residue fields. The main result establishes that ``if $(A,M)$ is a non-trivial local ring and $E$ a nonzero $\frac{A}{M}$-vector space, then $R:=A\propto~E$ is a non-arithmetical total ring of quotients. Moreover, $R$ is a Gaussian ring if and only if so is $A$." This enables us to build new examples of non-arithmetical Gaussian total rings of quotients or non-Gaussian total rings of quotients (which are necessarily Pr\"ufer). Further the weak dimension of these constructions turns out to be infinite which subjects, here too, the new examples of Gaussian rings to Bazzoni-Glaz conjecture.

A problem initially associated with Kaplansky and his student Tsang \cite{AK,BG,GV,Lu,T} and
also termed as Tsang-Glaz-Vasconcelos conjecture in \cite{HH} sustained that ``every nonzero Gaussian polynomial over a domain has an invertible (or,
equivalently, locally principal) content ideal." It is well-known that a polynomial over any ring is Gaussian if its content ideal is locally
principal. The converse is precisely the object of Kaplansky-Tsang-Glaz-Vasconcelos conjecture extended to those rings where ``every Gaussian
polynomial has locally principal content ideal. The objective of Section~\ref{sec:4} is to validate this conjecture in a large family of rings distinct from the three classes of arithmetical rings, of locally domains, and of locally approximately Gorenstein rings, where the conjecture holds so far. This was made possible by the main result which states that a trivial ring extension of a domain by its quotient field satisfies the condition that ``every Gaussian polynomial has locally principal content ideal." We end up with a conjecture that equates the latter condition with the local irreducibility of the zero ideal. This would offer an optimal solution to the Kaplansky-Tsang-Glaz-Vasconcelos conjecture that recovers all previous results. The section closes with a discussion -backed with examples- which attempts to rationalize this statement.

\section{Extensions of domains}\label{sec:2}

This section explores trivial ring extensions of the form $R:=A \propto~B$, where $A \subseteq B$ is an extension of integral domains. Notice in this context that $(a,b)\in R$ is regular if and only if $a\not=0$. The main result (Theorem~\ref{sec:2.1}) examines the transfer of Pr\"ufer conditions to $R$ and hence generates new  examples of non-arithmetical Gaussian rings and of arithmetical rings with weak dimension $\gvertneqq1$.

In 1969, Osofsky proved that the weak dimension of an arithmetical ring is either $\leq1$ or infinite \cite{Os}. In 2005, Glaz proved Osofsky's result in the class of coherent Gaussian rings \cite[Theorem 3.3]{G2}. Recently, Bazzoni and Glaz conjectured that ``the weak dimension of a Gaussian ring is 0, 1, or $\infty$" \cite{BG2}. Theorem~\ref{sec:2.1} validates this conjecture for the class of all Gaussian rings emanating from these constructions. Moreover, Example~\ref{sec:2.7} widens its scope of validity beyond coherent Gaussian rings.

\begin{thm}\label{sec:2.1}
Let $A\subseteq B$ be an extension of domains and $K :=\qf(A)$. Let $R:=A \propto~B$ be the trivial ring extension of $A$ by $B$. Then:\\
\1 $R$ is Gaussian if and only if $R$ is  Pr\"ufer if and only if $A$ is Pr\"ufer with $K\subseteq B$.\\
\2 $R$ is arithmetical if and only if $A$ is Pr\"ufer with $K=B$.\\
\3 $\w.dim(R)=\infty$.
\end{thm}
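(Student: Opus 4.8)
The plan is to prove the three statements together, exploiting the local structure of trivial ring extensions and the diagram of implications recalled in the introduction. First I would set up the groundwork on localizations: since every maximal ideal of $R=A\propto B$ has the form $M\propto B$ with $M$ maximal in $A$, and since localizing commutes with idealization in the sense that $R_{M\propto B}\cong A_M\propto B_M$ (where $B_M$ means $B\otimes_A A_M$), all five Prüfer-type conditions may be checked locally. This reduces each of (1) and (2) to the case where $A$ is a valuation domain (for the "if" direction) and lets me read off necessary conditions from a single localization (for the "only if" direction).

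For part (1), the implication Gaussian $\Rightarrow$ Prüfer is immediate from the diagram, so the real content is: $R$ Prüfer $\iff$ $A$ Prüfer and $K\subseteq B$. For the "only if" direction, I would pick a nonzero $a\in A$ and examine the regular ideal of $R$ generated by $(a,0)$ and $(0,b)$ for suitable $b\in B$; invertibility of such ideals in the total ring of quotients of $R$ should force $a^{-1}\in B$ (giving $K\subseteq B$) and force finitely generated ideals of $A$ to be invertible (giving $A$ Prüfer). For the "if" direction, assuming $A$ is a valuation domain with $K\subseteq B$, I would show directly that every finitely generated regular ideal of $R$ is principal (or invertible): a regular element has first coordinate a nonzero $a\in A$, and since $A$ is valuation one coordinate dominates, while the condition $K\subseteq B$ lets me absorb the module part. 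Part (2) is the same analysis with "arithmetical = locally principal f.g. ideals", where now one must handle \emph{all} finitely generated ideals, including non-regular ones such as $0\propto B'$; controlling these forces the stronger condition $B=K$ (so that $B$, as an $A$-module, is "as principal as possible" over the valuation domain $A$), and conversely when $A$ is valuation and $B=K$ one checks $R$ is arithmetical, again locally.

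For part (3), the point is that $R=A\propto B$ always has infinite weak dimension. The cleanest route: the $R$-module $0\propto B$ is annihilated by $0\propto B$ (since $(0,b)(0,b')=0$), so it is really a module over $R/(0\propto B)\cong A$, and more to the point $0\propto B\cong B$ as an $A$-module while $(0\propto B)^2=0$ inside $R$. I would exhibit a non-trivial self-extension or a periodic-type resolution: multiplication by an element of the form $(0,b)$ gives maps $R\to R$ whose kernels and images involve $0\propto B$, and splicing these shows $\operatorname{Tor}^R_n(R/(0\propto B), R/(0\propto B))\neq 0$ for arbitrarily large $n$, whence $\operatorname{w.dim}(R)=\infty$. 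Concretely, if $b\in B$ is nonzero then $(0,b)$ is a nonzero zerodivisor with $(0,b)^2=0$, and the complex $\cdots\xrightarrow{(0,b)} R\xrightarrow{(0,b)} R\xrightarrow{(0,b)}R$ fails to be exact at every spot by a dimension/annihilator count after localizing at a maximal ideal, yielding nonvanishing flat dimension in all degrees.

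The main obstacle I anticipate is the "if" direction of part (1): verifying that, when $A$ is an arbitrary Prüfer domain (not just a valuation domain) with $K\subseteq B$, every finitely generated regular ideal of $R$ is invertible. The reduction to the valuation case via localization is what makes this tractable, but one must be careful that regularity is preserved and reflected under localization (it is, since $(a,b)$ is regular in $R$ iff $a\neq 0$), and that an ideal of $R$ which becomes principal in every localization $R_{M\propto B}$ is genuinely invertible — this is the standard local-global principle for invertibility, which applies because $R$ is, in the relevant cases, its own total ring of quotients only in a controlled way. A secondary delicate point is handling ideals of $R$ \emph{not} of the form $I\propto E'$, as flagged in the introduction via \cite[Example 2.5]{KM}; but for \emph{finitely generated} ideals one can reduce to generators of the shape $(a_i,b_i)$ and argue with those directly, so I expect this to be a manageable bookkeeping issue rather than a genuine obstruction.
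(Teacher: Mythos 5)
Your reduction to localizations $R_{M\propto B}\cong A_M\propto B_M$ is legitimate here (regular elements are exactly those with nonzero first coordinate, so the Pr\"ufer, Gaussian and arithmetical conditions all localize in this setting), and your treatment of (2) and of the Pr\"ufer equivalence in (1) follows essentially the same computations as the paper. But part (1) is a three-way equivalence, and your plan only delivers ``$R$ Gaussian $\Rightarrow$ $R$ Pr\"ufer'' (from the diagram) together with ``$R$ Pr\"ufer $\Leftrightarrow$ $A$ Pr\"ufer and $K\subseteq B$.'' Nothing in your outline proves the remaining implication that these conditions force $R$ to be \emph{Gaussian}, and this cannot come for free: Pr\"ufer does not imply Gaussian in general, even for local total rings of quotients (the paper's Example~\ref{sec:2.6} is precisely a trivial extension $A\propto B$ with $B$ not a domain which is Pr\"ufer and not Gaussian). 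This is the real content of (1) in the paper: one shows $K\subseteq B$ gives $Q(R)=K\propto B$, verifies by a Gauss-lemma argument that $K\propto B$ is Gaussian (unit content, or both contents inside $0\propto B$ whose product is zero), and then invokes Bazzoni--Glaz \cite[Theorem 3.3]{BG2} to transfer Gaussianness from $Q(R)$ back to the Pr\"ufer ring $R$. You need either this route or a direct verification of $c(fg)=c(f)c(g)$ over the local rings $A_M\propto B_M$; neither appears in your sketch.

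Part (3) also has a gap: your two-periodic complex built from multiplication by $(0,b)$ does not do what you claim. The kernel of $(0,b)\colon R\to R$ is $0\propto B$ while the image is $0\propto Ab$, so the complex is not exact whenever $Ab\subsetneq B$ (hence it is not a resolution and computes nothing), and it \emph{is} exact when $B=Ab$ (e.g.\ $B=A$, $b=1$), so ``fails to be exact at every spot'' is false; in neither case does one get $\operatorname{Tor}^R_n\neq 0$ for all $n$ or a lower bound on flat dimension by ``splicing.'' The periodic-resolution trick requires $\operatorname{ann}(0,b)=(0,b)R$, i.e.\ $B$ cyclic over $A$, which fails exactly in the interesting cases. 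The paper's argument avoids this: localize at $T=(A\setminus\{0\})\times 0$ to reduce to $K\propto V$ with $V$ a nonzero $K$-vector space (Lemma~\ref{sec:2.2.1}), observe that the first syzygy of $J:=0\propto V$ in a free presentation is $0\propto V^{(I)}\cong J^{(I)}$, and that $J$ is not flat; the resulting inequality $\fd(J)=\fd(J^{(I)})\leq \fd(J)-1$ forces $\fd(J)=\infty$, hence $\w.dim(R)=\infty$. You would need this syzygy-repetition argument (or an equivalent bar-resolution computation over the square-zero extension) to make (3) rigorous.
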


The proof of the theorem involves the following lemmas of independent interest.

\begin{lemma}\label{sec:2.2}
Let $A$ be a ring, $E$ an nonzero $A$-module, and $R:=A \propto~E$. If $R$ is Gaussian (resp., arithmetical), then so is $A$.
\end{lemma}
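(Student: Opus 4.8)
The plan is to exploit the fact that both the Gaussian and the arithmetical conditions are stable under taking quotients by an ideal. Concretely, I would first observe that $A \cong R/(0\propto E)$, where $0\propto E$ is an ideal of $R$ (it is of the form $I\propto E'$ with $I=0$, $E'=E$, and trivially $IE=0\subseteq E$). Then it suffices to recall that a homomorphic image of a Gaussian (resp., arithmetical) ring is again Gaussian (resp., arithmetical). So the proof reduces to verifying this descent-to-quotients statement for each of the two conditions.

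For the arithmetical case, the argument is immediate: if every finitely generated ideal of $R$ is locally principal, then for any prime $\mathfrak p$ of $A$, corresponding to the prime $\mathfrak p\propto E$ of $R$, localization commutes with the quotient map, so every finitely generated ideal of $A_{\mathfrak p}$ is principal; hence $A$ is arithmetical. For the Gaussian case, I would use the polynomial-content characterization directly: given $f,g\in A[X]$, lift them to $R[X]$ via the inclusion $A\hookrightarrow R$, $a\mapsto (a,0)$; since $R$ is Gaussian we have $c_R(fg)=c_R(f)c_R(g)$ in $R$, and applying the ring surjection $R\twoheadrightarrow A$ (which sends contents to contents) yields $c_A(fg)=c_A(f)c_A(g)$. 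Thus $A$ is Gaussian.

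The only genuinely delicate point is the bookkeeping around $0\propto E$ being a legitimate ideal and $A$ being its quotient — but this is exactly the structure recalled in the introduction (ideals of the form $I\propto E'$ with $IE\subseteq E'$), so there is no real obstacle. I expect the write-up to be short: identify the ideal, cite that Gaussian and arithmetical pass to quotients, done. If one prefers to avoid invoking ``quotients of Gaussian rings are Gaussian'' as a black box, the content-map argument above supplies a self-contained proof in a couple of lines, and the arithmetical case needs only that localization and quotient commute. No step here should require more than routine verification.
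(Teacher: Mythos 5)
Your proposal is correct and is essentially the paper's own argument: the paper simply notes that $A\cong R/(0\propto E)$ and that the Gaussian and arithmetical properties are stable under factor rings. Your extra verifications of that stability (the content-map argument and the localization/quotient bookkeeping) are sound but are exactly the routine details the paper leaves as ``straightforward.''
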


\begin{proof}
Straightforward since the arithmetical and Gaussian properties are stable under factor rings (here $A\cong \frac{R}{0\propto E}$).
\end{proof}

Notice that Lemma~\ref{sec:2.2} does not hold for the Pr\"ufer property as shown by Example~\ref{sec:2.6}.

\begin{lemma}\label{sec:2.2.1}
Let $K$ be a field, $E$ a nonzero $K$-vector space, and $R:=K \propto~E$. Then $\w.dim(R)=\infty$.
\end{lemma}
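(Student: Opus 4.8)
The plan is to show that the residue field $K \cong R/(0 \propto E)$ has infinite flat (weak) dimension as an $R$-module, which forces $\w.dim(R) = \infty$. First I would set $M := 0 \propto E$, the maximal ideal of the local ring $(R, M)$; note that $M$ is a $K$-vector space annihilated by itself, since for $(0,e),(0,e') \in M$ we have $(0,e)(0,e') = (0,0)$. Thus as an $R$-module $M$ is just a direct sum of copies of $R/M \cong K$, and in particular the short exact sequence $0 \to M \to R \to K \to 0$ exhibits $M$ as a nonzero $R$-module with $M^2 = 0$. The key observation is that $\Tor$-computations against $K$ reduce to linear algebra over $K$: tensoring any $R$-module with $K = R/M$ kills the $M$-action, and the $R$-module structure on $M$ itself is the trivial (via $R \to K$) one.

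Next I would argue that $\fd_R(K) = \infty$. Suppose toward a contradiction that $\fd_R(K) = n < \infty$. Pick a nonzero $e_0 \in E$ and consider the element $x := (0,e_0) \in M$. Since $x^2 = 0$ but $x \neq 0$, and $R$ is local with $x$ in the maximal ideal, $x$ is a zerodivisor; more precisely $\mathrm{Ann}_R(x) = M$ because $(a,e)(0,e_0) = (0, a e_0)$ vanishes exactly when $a e_0 = 0$, i.e.\ when $a = 0$ (as $E$ is a $K$-vector space, so torsion-free over the field $K$ — here we use that $A = K$ is a field). Hence $\mathrm{Ann}_R(x) = M = 0 \propto E$, which is not finitely generated in general, but more to the point the cyclic module $R/\mathrm{Ann}_R(x) = R/M \cong K$ embeds in $R$ via multiplication by $x$. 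This self-referential appearance of $K$ as both a submodule and a quotient of modules in any resolution is what blocks finiteness: one builds a periodic-type pattern forcing $\Tor^R_i(K,K) \neq 0$ for all $i$.

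More cleanly, I would compute $\Tor^R_i(K,K)$ directly using the standard minimal resolution. Over the local ring $(R,M)$ with $M^2 = 0$, a minimal free resolution of $K = R/M$ has all differentials given by matrices with entries in $M$; tensoring with $K$ makes every differential zero, so $\Tor^R_i(K,K) \cong K^{b_i}$ where $b_i$ is the $i$-th Betti number. It therefore suffices to show the resolution does not terminate, equivalently that $b_i \neq 0$ for all $i$. Since $M$ is an infinite-dimensional $K$-vector space (or even just nonzero) with $M \cdot M = 0$, we get $\Tor^R_1(K,K) = M/M^2 = M \neq 0$, and then the first syzygy of $K$ is $M$, which is a nonzero direct sum of copies of $K$; hence its syzygies repeat those of $K$ and $b_i = b_{i-1} \cdot (\dim_K M) \neq 0$ for all $i \geq 1$. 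This gives $\fd_R(K) = \infty$, whence $\w.dim(R) \geq \fd_R(K) = \infty$.

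The main obstacle is making the minimality-of-resolution argument rigorous when $E$ (hence $M$) may be infinite-dimensional: one must ensure that a minimal free resolution exists and that "entries in $M$" genuinely forces the tensored differentials to vanish, which is standard for Noetherian local rings but needs a direct check here since $R$ need not be Noetherian. I would handle this by avoiding the Betti-number language and instead exhibiting explicitly, for each $i$, a nonzero element of $\Tor^R_i(K,K)$ using the bar-type or Koszul-type complex built from a single $0 \neq x \in M$ with $x^2 = 0$: the two-periodic complex $\cdots \xrightarrow{x} R \xrightarrow{x} R \xrightarrow{x} R \to K' \to 0$ where $K' = R/xR$, combined with $xR \cong R/M \cong K$, shows $\Tor^R_i(K, K) \neq 0$ for all $i$ after a short diagram chase — and this only uses a single element, so no finiteness of $E$ is required beyond $E \neq 0$.
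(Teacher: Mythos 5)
Your main line of attack is sound and is, in substance, the same periodicity argument as the paper's: the paper shifts flat dimension along the exact sequence $0 \rightarrow J^{(I)} \rightarrow R^{(I)} \rightarrow J \rightarrow 0$ with $J:=0\propto E$, whose kernel is a direct sum of copies of $J$ itself, while you shift along $0 \rightarrow M \rightarrow R \rightarrow K \rightarrow 0$ with $M:=0\propto E\cong K^{(\Lambda)}$ (as $M^{2}=0$), whose kernel is a direct sum of copies of $K$. Your worry about minimal resolutions is unnecessary: no Noetherian hypothesis is needed, because the long exact sequence of $\operatorname{Tor}$ applied to $0 \rightarrow M \rightarrow R \rightarrow K \rightarrow 0$, together with the fact that $\operatorname{Tor}$ commutes with direct sums in the first variable, gives $\operatorname{Tor}^{R}_{i+1}(K,K)\cong \operatorname{Tor}^{R}_{i}(M,K)\cong \operatorname{Tor}^{R}_{i}(K,K)^{(\Lambda)}$ for $i\geq 1$, while $\operatorname{Tor}^{R}_{1}(K,K)\cong M\otimes_{R}K\cong M\neq 0$ since the map $M\otimes_{R}K\rightarrow R\otimes_{R}K\cong K$ is zero. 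Induction then yields $\operatorname{Tor}^{R}_{i}(K,K)\neq 0$ for all $i$, hence $\fd_{R}(K)=\infty$ and the weak dimension of $R$ is infinite. Written this way, your argument is complete and rigorous for arbitrary $E\neq 0$.

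The genuine flaw is in the device you offer as the ``clean'' substitute: the two-periodic complex $\cdots \buildrel x \over\rightarrow R \buildrel x \over\rightarrow R \rightarrow R/xR \rightarrow 0$ built from a single $x=(0,e_{0})$ is exact only when $\operatorname{Ann}_{R}(x)=xR$. You computed $\operatorname{Ann}_{R}(x)=M=0\propto E$ yourself, whereas $xR=0\propto Ke_{0}$; these coincide only if $\dim_{K}E=1$. For $\dim_{K}E\geq 2$ the complex fails exactness at every stage (and $R/xR\not\cong K$ there), so it is not a flat resolution and cannot be used to compute $\operatorname{Tor}^{R}_{i}(K,K)$; the promised ``short diagram chase'' would have to fall back on exactly the syzygy argument above. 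Drop that paragraph and formalize the dimension-shifting argument instead; with that repair the proof stands and essentially coincides with the paper's.
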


\begin{proof}
Let $\{f_{i}\}_{i \in I}$ be a basis of the $K$-vector space $E$ and let $J:=0\propto E$. Consider the $R$-map $R^{(I)}\buildrel u \over\rightarrow J$ defined by $u((a_{i},e_{i})_{i \in I}) =(0,\sum_{i \in I}a_{i}f_{i})$. Clearly, $\Ker(u) =0 \propto E^{(I)}$. Here we are identifying $R^{(I)}$ with $A^{(I)}\propto E^{(I)}$ as $R$-modules.
We have the exact sequence of $R$-modules:
$$0 \rightarrow 0 \propto E^{(I)} \rightarrow R^{(I)}  \buildrel u \over\rightarrow  J \rightarrow 0.$$
We claim that $J$ is not flat. Otherwise, by \cite[Theorem 3.55]{Ro}, we obtain
$$0\propto E^{(I)}=J^{(I)}=JR^{(I)}=(0 \propto E^{(I)})\cap JR^{(I)}=(0 \propto E^{(I)})J=0,$$
a contradiction. Therefore the above exact sequence  yields
$$\fd(J)= \fd( J^{(L)}) \leq \fd(J)-1.$$
This forces the flat dimension of $J$ and hence the weak dimension of $R$ to be infinite.
\end{proof}

\begin{proof}[Proof of  Theorem~\ref{sec:2.1}]
(1) We need only prove the following implications: \begin{center}$R$ Pr\"ufer $\Rightarrow$ $A$ Pr\"ufer with $K\subseteq B$ $\Rightarrow$ $R$
Gaussian\end{center}

Assume $R$ is a Pr\"ufer ring. We wish to show first that $K\subseteq B$ in the case when $A$ is local. Let $x\not=0\in A$ and let
$I:=((x,0),(x,1))R$, a finitely generated regular ideal of $R$. Then $I$ is invertible and hence principal (since $R$ is local too). Write $I=(a,b)R$
for some $a\in A$ and $b\in B$. Clearly, $a=ux$ for some invertible element $u$ in $A$, hence $I=(ux,b)R=(x,u^{-1}b)R$. Further $(x,0)\in I$ yields
$u^{-1}b=b'x$ for some $b'\in B$. It follows that $I=(x,b'x)R=(x,0)(1,b')R=(x,0)R$ since $(1,b')$ is invertible. But $(x,1)\in I$ yields $1=xb''$ for
some $b''\in B$. Therefore $K\subseteq B$. Next suppose $A$ is not necessarily local and let $q\in\Spec(B)$ and $p:=q\cap A$. Clearly,
$S:=(A\setminus p)\times 0$ is a multiplicatively closed subset of $R$ with the feature that $\frac{r}{1}$ is regular in $S^{-1}R$ if and only if $r$
is regular in $R$. So finitely generated regular ideals of $S^{-1}R$ originate from finitely generated regular ideals of $R$. Hence $A_{p}\propto
B_{p}=S^{-1}R$ is a Pr\"ufer ring. Whence $K=\qf(A_{p})\subseteq B_{p}\subseteq B_{q}$. It follows that $K\subseteq B=\bigcap B_{q}$, where $q$
ranges over $\Spec(B)$, as desired. Now, one can easily check that $K\subseteq B$ implies $K\propto B=Q(R)$, the total ring of quotients of $R$.
Moreover, let $f=\sum (k_{i},b_{i})x^{i}$ and $g=\sum (k'_{j},b'_{j})x^{j}$ be two polynomials in $Q(R)[x]$. If there is $i$ or $j$ such that
$k_{i}\not=0$ or $k'_{j}\not=0$, then $(k_{i},b_{i})$ or $(k'_{j},b'_{j})$ is invertible, hence $c(f)=Q(R)$ or $c(g)=Q(R)$, whence $c(fg)=c(f)c(g)$
(this is Gauss lemma which asserts that a polynomial with unit content is Gaussian). If $k_{i}=k'_{j}=0$ for all $i$ and $j$, then
$c(fg)=0=c(f)c(g)$. Consequently, $Q(R)$ is a Gaussian ring and so is $R$ by \cite[Theorem 3.3]{BG2}. By Lemma~\ref{sec:2.2}, $A$ is a Pr\"ufer
domain, completing the proof of the first implication.

Assume $A$ is a Pr\"ufer domain with $K\subseteq B$. Let $I$ be a nonzero finitely generated ideal of $R$ minimally generated by $(a_{1},b_{1}),
\ldots, (a_{n},b_{n})$. For any $a\not=0\in A$ and any $b,b'\in B$, we have $(0,b')=(a,b)(0,a^{-1}b')$. So minimality forces either $a_{i}=0$ for
each $i$ or $a_{i}\not=0$ for each $i$. In the first case, $I^{2}=0$ and hence $I$ is not a regular ideal. Next assume $a_{i}\not=0$ for each $i$. It
follows that $I=(\sum Aa_{i})\propto B$ since $(a_{i}, b)=(a_{i},b_{i})(1,a_{i}^{-1}(b-b_{i}))$ for each $i$ and any $b\in B$. Since $A$ is a
Pr\"ufer domain, $J:=\sum Aa_{i}$ is invertible and $aJ^{-1}$ is an ideal of $A$ for some $a\not= 0\in A$.
\(\begin{array}{lcl}
\hbox{So } (a,0)^{-1}(aJ^{-1} \propto B)I   & =     &(a,0)^{-1}(aJ^{-1} \propto B)(J \propto B)\\
                                    & =     &(a,0)^{-1}(aJ^{-1}J \propto B)\\
                                    & =     &(a,0)^{-1}(aA\propto B)\\
                                    & =     &R.
\end{array}\)\\
Consequently, $R$ is a Pr\"ufer ring and hence Gaussian by \cite[Theorem 3.3]{BG2}, completing the proof of (1).

(2) Assume $R$ is an arithmetical ring. By (1), $A$ is Pr\"ufer with $K\subseteq B$. So $K\propto B=Q(R)$ is arithmetical since it is a localization
of $R$. Let $b\not=0\in B$. Then $I:=((0,1),(0,b))Q(R)$ is principal and hence $I:=(0,b')Q(R)$ for some $b'\not=0\in B$. Further $(0,b)\in I$ yields
$b=kb'$ for some some $k\not=0\in K$, and then $I:=(0,k^{-1}b)Q(R)$. Moreover $(0,1)\in I$ yields $1=k'k^{-1}b$ for some $k'\not=0\in K$. It follows
that $b\in K$ and thus $K=B$. Conversely, assume $A$ is Pr\"ufer with $K=B$. By (1), $R=A\propto K$ is Gaussian. Moreover $Q(R)=K \propto K$ is a
principal ring (a fortiori arithmetical) since it has a unique nonzero proper ideal $M :=0 \propto K =T(0,1)$. By \cite[Theorem 3.5]{BG2}, $R$ is
arithmetical, completing the proof of (2).

(3) Let $S :=A\setminus\{0\}$. So $T:=S\times 0$ is a multiplicatively closed subset of $R$. By Lemma~\ref{sec:2.2.1}, $\w.dim(T^{-1}R)=\w.dim(K \propto KB)=\infty$. So  $\w.dim(R)=\infty$. This completes the proof of the theorem.
\end{proof}

The following corollary is an immediate consequence of Theorem~\ref{sec:2.1}.

\begin{corollary}\label{sec:2.3}
Let $D$ be a domain, $K :=\qf(D)$, and $R:=D \propto~K$. Then the following statements are equivalent:\\
\1 D is a Pr\"ufer domain;\\
\2 $R$ is an arithmetical ring;\\
\3 $R$ is a Gaussian ring;\\
\4 $R$ is a Pr\"ufer ring.
\qed\end{corollary}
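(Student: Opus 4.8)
The plan is to derive Corollary~\ref{sec:2.3} directly from Theorem~\ref{sec:2.1} by specializing the extension $A\subseteq B$ to the case $B=K=\qf(D)$. With $A:=D$ and $B:=K$, the hypothesis ``$A$ is Pr\"ufer with $K\subseteq B$'' becomes simply ``$D$ is a Pr\"ufer domain'' (since $K\subseteq K$ holds trivially), and the hypothesis ``$A$ is Pr\"ufer with $K=B$'' becomes ``$D$ is a Pr\"ufer domain'' as well (since $K=K$). So both the Gaussian/Pr\"ufer criterion in Theorem~\ref{sec:2.1}(1) and the arithmetical criterion in Theorem~\ref{sec:2.1}(2) collapse to the single condition that $D$ be a Pr\"ufer domain.

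First I would invoke Theorem~\ref{sec:2.1}(1) applied to the extension $D\subseteq K$: it gives the equivalence of \3 (``$R$ is Gaussian''), \4 (``$R$ is Pr\"ufer''), and the condition ``$D$ is Pr\"ufer with $K\subseteq K$'', i.e.\ \1. This already establishes $\1\Leftrightarrow\3\Leftrightarrow\4$. Next I would invoke Theorem~\ref{sec:2.1}(2), again with $B=K$: it states that $R$ is arithmetical if and only if $D$ is Pr\"ufer with $K=K$, i.e.\ if and only if \1 holds. This yields $\1\Leftrightarrow\2$. Chaining the two gives the full cycle $\1\Leftrightarrow\2\Leftrightarrow\3\Leftrightarrow\4$, which is exactly the statement of the corollary.

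There is essentially no obstacle here: the corollary is a pure specialization, and the only thing to check is that the side conditions $K\subseteq B$ and $K=B$ both become vacuous when $B=K$, which is immediate. One might optionally remark, for emphasis, that this special case is noteworthy precisely because the Gaussian, Pr\"ufer, and arithmetical properties of the idealization $D\propto K$ all coincide with the single classical Pr\"ufer condition on $D$ — in contrast to the general extension $A\subseteq B$, where the Gaussian/Pr\"ufer case (requiring only $K\subseteq B$) is strictly weaker than the arithmetical case (requiring $K=B$), a gap that Example~\ref{sec:2.6} and the surrounding discussion exploit to produce non-arithmetical Gaussian rings. Thus the proof is simply: apply Theorem~\ref{sec:2.1}(1) and (2) with $B=K$. \qed
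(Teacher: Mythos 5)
Your proposal is correct and takes exactly the paper's route: the paper gives no separate argument, declaring the corollary an immediate consequence of Theorem~\ref{sec:2.1} specialized to $B=K$, which is precisely what you do. (One minor slip in your optional remark: the non-arithmetical Gaussian rings arising from $K\subsetneqq B$ are illustrated by Example~\ref{sec:2.5}, whereas Example~\ref{sec:2.6} concerns the Pr\"ufer-but-not-Gaussian phenomenon outside domain extensions.)
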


Recall Jensen's 1966 result: ``for a ring $R$, $\w.dim(R)\leq 1$ if and only if $R$ is arithmetical and reduced" \cite{J}. Classical examples of
arithmetical rings with weak dimension $\gneqq 1$ stem from Jensen's result as non-reduced principal rings, e.g., $\Z/n^{2}\Z$ for any integer $n\geq
2$. In this vein, Theorem~\ref{sec:2.1} generates a new family of examples quite far from being principal as shown below. For this purpose, recall
that a ring $R$ is finite conductor if $aR\cap bR$ and $(0:c)$ are finitely generated for any $a,b,c\in R$ \cite{G12}. The class of finite conductor rings properly contains the class of coherent (a fortiori, Noetherian and hence principal) rings \cite{G12,KM}.

\begin{example}\label{sec:2.4}
Let $D$ be any Pr\"ufer domain which is not a field and $K :=\qf(D)$. Then $R:=D \propto~K$ is an arithmetical ring with $\w.dim(R)=\infty$.
Moreover, $R$ is not a finite conductor ring by \cite[Thoerem 2.8]{KM} and hence not coherent.
\end{example}

Also, Theorem~\ref{sec:2.1} enriches the literature with new examples of non-arithmetical Gaussian rings, as shown below.

\begin{example}\label{sec:2.5}
Let $K\subsetneqq L$ be a field extension. Then $R:=K \propto~L$ is a Gaussian ring which is not arithmetical.
\end{example}

The next example shows that Theorem~\ref{sec:2.1} widens the scope of validity of Bazzoni-Glaz conjecture beyond the class of coherent Gaussian rings.

\begin{example}\label{sec:2.7}
Let $\Z$ and $\R$ denote the ring of integers and field of real numbers, respectively. Then $R:=\Z_{(2)} \propto~\R$ satisfies the following statements:\\
\1 $R$ is a Gaussian ring,\\
\2 $R$ is not an arithmetical ring,\\
\3 $R$ is not a coherent ring,\\
\4 $\w.dim(R)=\infty$.
\end{example}

\begin{proof}
Assertions \1, \2, and \4 hold by direct application of Theorem~\ref{sec:2.1}. It remains to prove \3. Indeed, consider the following exact sequence over $R$
$$0 \rightarrow 0 \propto~\R \rightarrow R  \buildrel u \over\rightarrow  R(0,1)=0 \propto~\Z_{(2)} \rightarrow 0$$
where $u$ is defined by $u(a,b)=(a,b)(0,1)=(0,a)$. Now $0 \propto~\R$ is not finitely generated as an $R$-module (otherwise $\R$ would be finitely generated as a $\Z_{(2)}$-module). Hence $0 \propto~\Z_{(2)}$ is a finitely generated ideal of $R$ that is not finitely presented. Whence $R$ is not coherent, as desired.
\end{proof}

The next example illustrates the failure of Theorem~\ref{sec:2.1}, in general, beyond the context of domain extensions.

\begin{example}\label{sec:2.6}\rm
Let $(A,M)$ be a non-valuation local domain, $E$ a nonzero $A$-module with $ME=0$, and $B:=A \propto~E$. Then $R:=A \propto~B$ is a Pr\"ufer ring
which is not Gaussian.
\end{example}

\begin{proof}
Indeed, one can easily check that $R$ is a total ring of quotients and hence a Pr\"ufer ring. By Lemma~\ref{sec:2.2}, $R$ is
not Gaussian.
\end{proof}

\section{A class of total rings of quotients}\label{sec:3}

In a recent paper devoted to Gaussian properties, Bazzoni and Glaz have proved that a Pr\"ufer ring satisfies any of the other four Pr\"ufer
conditions (mentioned above) if and only if its total ring of quotients satisfies that same condition \cite[Theorems 3.3 \& 3.6 \& 3.7 \& 3.12]{BG2}.
This fact narrows the scope of study to the class of total rings of quotients.

This section investigates Pr\"ufer conditions in a particular class of total rings of quotients; namely, those arising as trivial ring extensions of
local rings by vector spaces over the residue fields. The main result (Theorem~\ref{sec:3.1}) enriches the literature with original examples of
non-arithmetical Gaussian total rings of quotients as well as non-Gaussian total rings of quotients (which are necessarily Pr\"ufer).  The theorem validates Bazzoni-Glaz conjecture for the class of Gaussian rings emanating from these constructions.

\begin{thm}\label{sec:3.1}
Let $(A,M)$ be a local ring and $E$ a nonzero $\frac{A}{M}$-vector space. Let $R:=A\propto~E$ be the trivial ring extension of $A$ by $E$. Then:\\
\1 $R$ is a total ring of quotients and hence a Pr\"ufer ring.\\
\2 $R$ is Gaussian if and only if $A$ is Gaussian.\\
\3 $R$ is arithmetical if and only if $A:=K$ is a field and $\dim_{K}E=1$.\\
\4 $\w.dim(R)\gvertneqq 1$. If $M$ admits a minimal generating set, then $\w.dim(R)=\infty$.
\end{thm}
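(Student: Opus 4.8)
\textbf{Proof plan for Theorem~\ref{sec:3.1}.}

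The plan is to treat the four assertions in sequence, leveraging the structural fact that $(A,M)$ is local and $E$ is killed by $M$, so that $E$ is an $A/M$-vector space and $ME=0$. For \1, I would note that since $M$ is the unique maximal ideal of $A$, the unique maximal ideal of $R$ is $M\propto E$, and every element of $R$ outside $M\propto E$ has the form $(a,e)$ with $a$ a unit in $A$; such an element is a unit in $R$ because $(a,e)(a^{-1},-a^{-2}e)=(1,0)$. Hence every nonunit of $R$ lies in $M\propto E$, and I claim each such element is a zerodivisor: if $a\in M$ then, picking any nonzero $f\in E$, we get $(a,e)(0,f)=(0,af)=(0,0)$ since $af\in ME=0$. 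Therefore $R$ is its own total ring of quotients, hence trivially Pr\"ufer (every regular ideal being the whole ring).

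For \2, the forward direction is Lemma~\ref{sec:2.2}. For the converse, suppose $A$ is Gaussian; I would use the local characterization of Gaussian rings (a ring is Gaussian iff it is locally Gaussian, and by a result of Bazzoni--Glaz a local ring is Gaussian iff for every two elements $x,y$ of the ring one has $(x,y)^2=(x^2)$ or $(x^2,y^2)\subseteq(xy)$ — more precisely the condition that for all $x,y$, $(x,y)^2 = (x^2)$ or $(y^2)$, with the extra proviso that if $xy=0$ then $x^2=0$ or $y^2=0$). Since $R$ is local, it suffices to verify this two-generator condition in $R$. Given $(a,e),(a',e')\in R$, if $a$ or $a'$ is a unit we are done by Gauss's lemma as in the proof of Theorem~\ref{sec:2.1}. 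If $a,a'\in M$, then because $ME=0$ all cross terms $ae', a'e$ vanish, so $(a,e)(a',e')=(aa',0)$, $(a,e)^2=(a^2,0)$, $(a',e')^2=(a'^2,0)$, and the ideals generated by these in $R$ are $aa'A\propto aa'E=aa'A\propto 0$, etc. Thus the required identity in $R$ reduces, after stripping the (zero) second coordinate, to the corresponding identity $(a,a')^2=(a^2)$ or $(a'^2)$ in $A$, which holds because $A$ is Gaussian. One must also handle the degenerate clause: if $(a,e)(a',e')=0$ in $R$ then $aa'=0$ in $A$, so $a^2=0$ or $a'^2=0$ in $A$ (again Gaussian locally), giving $(a,e)^2=0$ or $(a',e')^2=0$ in $R$; this is where one uses that $A$ local Gaussian forces the needed annihilator behavior.

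For \3, by the implication diagram arithmetical implies Gaussian, so \2 forces $A$ Gaussian; and an arithmetical ring is locally a chain ring, so $R=M\propto E$-local must be a chain ring, meaning its ideals are totally ordered. I would exhibit two incomparable ideals unless $A$ is a field and $\dim_{A/M}E=1$: if $\dim E\geq 2$, pick independent $f_1,f_2\in E$ and observe $0\propto Af_1$ and $0\propto Af_2$ are incomparable; if $A$ is not a field, pick $0\neq a\in M$ and compare $aR=aA\propto 0$ with $0\propto Af$ for $0\neq f\in E$ — neither contains the other. Conversely if $A=K$ is a field and $\dim_K E=1$ then $R=K\propto K$ has exactly one nonzero proper ideal $0\propto K$, so it is a chain ring, a fortiori arithmetical. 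For \4, the first assertion $\w.dim(R)\gvertneqq1$ follows because by Jensen's result a ring of weak dimension $\le1$ is reduced, but $(0,f)^2=0$ for $0\ne f\in E$, so $R$ is not reduced. For the sharper statement, suppose $M$ has a minimal generating set $\{m_\lambda\}$; I would localize or argue directly that $R/(0\propto E)\cong A$ and that $0\propto E$ fails to be flat by an argument paralleling Lemma~\ref{sec:2.2.1}: choosing a basis $\{f_i\}_{i\in I}$ of $E$ over $A/M$, build the surjection $R^{(I)}\to 0\propto E$ with kernel $0\propto E^{(I)}$, and show flatness of $0\propto E$ would force $0\propto E^{(I)}=(0\propto E^{(I)})\cap (0\propto E)R^{(I)}=(0\propto E^{(I)})(0\propto E)=0$ via \cite[Theorem 3.55]{Ro}, a contradiction; then the self-referential inequality $\fd(0\propto E)\le\fd(0\propto E)-1$ forces $\fd(0\propto E)=\infty$, hence $\w.dim(R)=\infty$.

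The main obstacle I anticipate is the Gaussian converse in \2: one must pin down exactly which two-generator identity characterizes a local Gaussian ring (the precise statement from Bazzoni--Glaz, including the clause about $xy=0$), and then check that the idealization's multiplication — precisely because $ME=0$ kills all the mixed terms — faithfully transports that identity between $A$ and $R$ in both directions, including the degenerate annihilator case. The case analysis is short once the right characterization is invoked, but getting that characterization stated correctly is the crux.
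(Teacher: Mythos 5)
Parts \1--\3 and the first assertion of \4 are essentially correct, though you reach some of them by routes different from the paper's. For \2 the paper needs no characterization of local Gaussian rings: it checks the content formula directly, splitting into the case where some coefficient of $F$ is a unit (Gauss's lemma) and the case where all coefficients lie in $M\propto E$, where $ME=0$ collapses everything to $c(FG)=c(fg)\propto 0=c(f)c(g)\propto 0=c(F)c(G)$. Your route through the Bazzoni--Glaz two-generator criterion for local rings also works, provided you quote it correctly: the criterion in \cite[Theorem 2.2]{BG2} is that $(a,b)^{2}=(a^{2})$ or $(b^{2})$ for all $a,b$, together with ``if $(a,b)^{2}=(a^{2})$ and $ab=0$ then $b^{2}=0$''; your first formulation is garbled, but your ``more precise'' version (namely $xy=0$ forces $x^{2}=0$ or $y^{2}=0$) is equivalent to it in the presence of the first condition, and your reduction of the $R$-identities to $A$-identities via $ME=0$ is sound. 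For \3, your argument via ``local arithmetical $=$ chain ring'' plus explicit incomparable ideals ($0\propto Af_{1}$ vs.\ $0\propto Af_{2}$, and $aA\propto 0$ vs.\ $0\propto Af$) is a clean substitute for the paper's computation with principal two-generated ideals. For the first claim of \4, quoting Jensen ($\operatorname{w.dim}\leq 1$ implies reduced) together with $(0,f)^{2}=0$ is shorter than the paper's proof that $J:=0\propto E$ is not flat, and is perfectly valid.

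The genuine gap is in the second assertion of \4. Your presentation $R^{(I)}\rightarrow J$, $(a_{i},e_{i})_{i}\mapsto(0,\sum a_{i}f_{i})$, does \emph{not} have kernel $0\propto E^{(I)}$: since $\{f_{i}\}$ is a basis of $E$ over $A/M$, the relation $\sum a_{i}f_{i}=0$ only forces $a_{i}\in M$, so the kernel is $(M\propto E)^{(I)}$, exactly as the paper records; it equals $J^{(I)}$ only when $M=0$, i.e., in the field case of Lemma~\ref{sec:2.2.1}. Consequently the self-referential inequality $\fd(J)\leq\fd(J)-1$ does not follow from your sequence; what the sequence actually gives (once $J$ is known not to be flat) is $\fd(M\propto E)\leq\fd(J)-1$. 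To close the loop one still needs $\fd(J)\leq\fd(M\propto E)$, and this is precisely where the hypothesis that $M$ admits a minimal generating set enters---note that your argument never uses it, which should have been a warning sign, since as written it would prove $\operatorname{w.dim}(R)=\infty$ unconditionally. The paper's proof takes a minimal generating set of $M\propto E$, forms a second presentation $0\rightarrow\Ker(v)\rightarrow R^{(L)}\rightarrow M\propto E\rightarrow 0$, uses minimality (as in Rotman's Lemma 4.43) to get $\Ker(v)\subseteq(M\propto E)^{(L)}$, hence $\Ker(v)=V\propto E^{(L)}=(V\propto 0)\oplus J^{(L)}$, so that $J^{(L)}$ is a direct summand of $\Ker(v)$ and $\fd(J)=\fd(J^{(L)})\leq\fd(\Ker(v))\leq\fd(M\propto E)\leq\fd(J)-1$, forcing $\fd(J)=\infty$. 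Without this second exact sequence your argument only re-proves $\operatorname{w.dim}(R)>1$.
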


\begin{proof}
\1 Straightforward.

\2 By Lemma~\ref{sec:2.2}, only the sufficiency has to be proved. Assume $A$ is a Gaussian ring and let $F =\sum (a_{i},e_{i})x^{i}$ be a polynomial in $R[x]$. If $a_{i} \notin M$
for some $i$, then $(a_{i},e_{i})$ is invertible in $R$, hence $F$ is Gaussian. Now assume $a_{i} \in M$ for each $i$ and let $G =\sum
(a'_{j},e'_{j})x^{j} \in R[x]$. We may suppose, without loss of generality, that $a'_{j}\in M$ for each $j$. Let $f =\sum a_{i}x^{i}$ and $g =\sum
a'_{j}x^{j}$ in $A[x]$. One can easily check that $ME=0$ yields the following \\
\(\begin{array}{lcl}
c(FG)   &=  &c(fg) \propto c(fg)E \\
        &=  &c(fg)\propto 0 \\
        &=  &c(f)c(g)\propto 0\\
        &=  &c(F)c(G).
\end{array}\)\\
Therefore $F$ is  Gaussian, as desired.

\3 Sufficiency is clear since $K\propto~K$ is a principal ring. Next assume $R$ is an arithmetical ring. We claim that $A$ is a field. Deny and let $a \not= 0\in M$ and $e\not= 0\in E$. Therefore the ideal $I :=R(a,0) + R(0,e)$ is principal in $R$ (since $R$ is local). So $I=R(a',e')$ for some $(a',e') \in R$. Clearly, $(a,0)\in I$ forces $a'$ to be nonzero and belong to $M$. Further, $(0,e)\in I$ yields $ba'=0$ and $e=be'$ for some $b\in A$. Necessarily, $b \in M$ since $a'\not=0$. It follows that $e=be'=0$, the desired contradiction. Now, let $e,e'$ be two nonzero vectors in $E$. Then $I =R(0,e)+R(0,e')$ is a principal ideal of $R$. Similar arguments used in the proof of Theorem~\ref{sec:2.1}(2) yields $e=ke'$ for some $k\in K$. So that $\dim_{K}E=1$.

\4 Let $J :=0 \propto E$ and let $\{(f_{i})\}_{i \in I}$ be a basis of the $(A/M)$-vector space $E$. Consider the exact sequence of $R$-modules:
\begin{equation}\label{eq3.1}0 \rightarrow Ker(u) \rightarrow  R^{(I)}  \buildrel u \over\rightarrow  J \rightarrow 0\end{equation}
where $u((a_{i},e_{i})_{i \in I})=(0,\sum_{i \in I}a_{i}f_{i})$. Hence, $Ker(u) =(M \propto E)^{(I)}$. Here, too, we identify $R^{(I)}$ with $A^{(I)}\propto E^{(I)}$ as $R$-modules. We claim that $J$ is not flat. Otherwise, by \cite[Theorem 3.55]{Ro}, we obtain
$J^{(I)}=(M \propto E)^{(I)}\cap JR^{(I)}=J(M \propto E)^{(I)}=0$,
absurd. By \cite[Theorem 2.4]{Ro}, $\w.dim(R)\gvertneqq 1$. Next assume that $M$ admits a minimal generating set. Then one can easily check that $M\propto E$ admits a minimal generating set too. Let $(b_{i},g_{i})_{i \in L}$ denote a minimal generating set of $M \propto E$ and consider the exact sequence of $R$-modules:
$$0 \rightarrow Ker(v) \rightarrow  R^{(L)} \buildrel v\over\rightarrow  M \propto E  \rightarrow 0$$
where $v((a_{i},e_{i})_{i \in L})=\sum_{i \in L}(a_{i},e_{i})(b_{i},g_{i})$. The minimality assumption
yields (see the proof of \cite[Lemma 4.43]{Ro}) $$Ker(v) \subseteq (M \propto E)^{(L)}.$$ It follows that $\Ker(v) =V \propto E^{(L)}=(V \propto 0) \oplus J^{(L)}$, where $$V :=\{(a_{i})_{i \in L} \in M^{(L)}\ | \sum_{i \in L}a_{i}b_{i} =0\}.$$ We obtain
\begin{equation}\label{eq3.2} \fd\big((V \propto 0) \oplus J^{(L)}\big) \leq \fd(M \propto E).\end{equation}
On the other hand, from the exact sequence in (\ref{eq3.1}) we get
\begin{equation}\label{eq3.3} \fd(M \propto E)=\fd(M \propto E)^{(I)}\leq \fd(J)-1.\end{equation}
A combination of (\ref{eq3.2}) and (\ref{eq3.3}) yields $\fd(J)\leq \fd(J)-1$. Consequently, the flat dimension of $J$ (and a fortiori the weak dimension of $R$) has to be infinite, completing the proof of the theorem.
\end{proof}

 Theorem~\ref{sec:3.1} generates new and original examples of rings with zerodivisors subject to Pr\"ufer conditions as shown below.

\begin{example}\label{sec:3.2}\rm
 Let $(V,M)$ be a non-trivial valuation domain. Then $R:=V\propto~\frac{V}{M}$ is a non-arithmetical Gaussian total ring of quotients.
\end{example}

\begin{example}\label{sec:3.3}\rm
 Let $K$ be a field and $E$ a $K$-vector space with $\dim_{K}E\geq2$. Then $R:=K\propto~E$ is a non-arithmetical Gaussian total ring of quotients.
\end{example}

\begin{example}\label{sec:3.4}\rm
 Let $(A,M)$ be a non-valuation local domain. Then $R:=A\propto~\frac{A}{M}$ is a non-Gaussian total ring of quotients.
\end{example}

Recently, Bazzoni and Glaz proved that a Gaussian ring, with a maximal ideal $M$ such that the nilradical of $R_{M}$ is non-null and nilpotent, has infinite weak dimension \cite[Theorem 6.4]{BG2}. The next example widens the scope of validity of Bazzoni-Glaz conjecture as well as illustrates the setting of this result beyond coherent Gaussian rings.

\begin{example}\label{sec:3.5}\rm
Let $\R$ denote the field of real numbers and $x$ an indeterminate over $\R$. Then $R:=\R \propto~\R[x]$ satisfies the following statements:\\
\1 $R$ is a Gaussian ring,\\
\2 $R$ is not an arithmetical ring,\\
\3 $R$ is not a coherent ring,\\
\4 $R$ is local with nonzero nilpotent maximal ideal,\\
\5 $\w.dim(R)=\infty$.
\end{example}

\begin{proof}
Assertions \1 and \2 hold by direct application of Theorem~\ref{sec:3.1}. Assertion \3 is handled by \cite[Theorem 2.6(2)]{KM}. Clearly, \4 holds since the maximal ideal of $R$ is $M:=0 \propto~\R[x]$ (by \cite[Theorem 25.1(3)]{H}) with $M^{2}=0$. Finally, \5 is satisfied by Theorem~\ref{sec:3.1}(4), \cite[Proposition 6.3]{BG2}, or \cite[Theorem 6.4]{BG2}.
\end{proof}

\section{Kaplansky-Tsang-Glaz-Vasconcelos conjecture}\label{sec:4}


Let $R$ be a ring and $Q(R)$ its total ring of quotients. An ideal $I$ of $R$ is said to be invertible if $II^{-1}=R$, where $I^{-1}:=\{x\in Q(R)\ |\
xI\subseteq R\}$. A nonzero ideal is invertible if and only if it is regular, finitely generated, and locally principal. Particularly, for finitely
generated ideals of domains, invertibility coincides with the locally principal condition. A polynomial $f$ over $R$ is said to be Gaussian if
$c(fg)=c(f)c(g)$ holds for any polynomial $g$ over $R$.

A problem initially associated with Kaplansky and his student Tsang \cite{AK,BG,GV,Lu,T} and
also termed as Tsang-Glaz-Vasconcelos conjecture in \cite{HH} sustained that ``every nonzero Gaussian polynomial over a domain has an invertible (or,
equivalently, locally principal) content ideal." It is well-known that a polynomial over any ring is Gaussian if its content ideal is locally
principal. The converse is precisely the object of Kaplansky-Tsang-Glaz-Vasconcelos conjecture extended to those rings where ``every Gaussian
polynomial has locally principal content ideal."

Notice for convenience that the conjecture has a local character since the Gaussian condition is a
local property (i.e., a polynomial is Gaussian over a ring $R$ if and only if its image is Gaussian over $R_{M}$ for each maximal ideal $M$ of $R$).
It is this very fact that has enabled a natural extension of the conjecture from domains to rings (recall, for instance, that a Von
Neumann regular ring is locally a field).

Significant progress has been made on this conjecture. Glaz and Vasconcelos proved it for normal Noetherian domains \cite{GV}. Then Heinzer and
Huneke established its veracity over locally approximately Gorenstein rings (see definition below) and over locally Noetherian domains \cite[Theorem
1.5 \& Corollary 3.4]{HH}. Recently, Loper and Roitman settled the conjecture for (locally) domains \cite[Theorem 4]{LR}, and then Lucas extended
their result to arbitrary rings by restricting to polynomials with regular content \cite[Theorem 6]{Lu}. Obviously, the conjecture is true in arithmetical rings. Moreover, trivial ring extensions offer the possibility to widen the scope of its validity to a large family of rings distinct from the above contexts. This gives birth to a new class of
rings containing strictly the three classes of arithmetical rings, of locally domains, and of locally approximately Gorenstein rings (see Figure~\ref{fig1}). We term the new concept as follows:

\begin{definition}\label{sec:4.1}\rm
 A ring $R$ is pseudo-arithmetical if every Gaussian polynomial over $R$ has locally principal content ideal.
\end{definition}

We first prove a transfer result (Theorem~\ref{sec:4.2}) on trivial ring extensions. Then Conjecture~\ref{sec:4.5} will equate the
pseudo-arithmetical notion with the local irreducibility of the zero ideal. If true, this conjecture would offer an optimal solution to the Kaplansky-Tsang-Glaz-Vasconcelos conjecture that recovers all previous results.

\begin{thm}\label{sec:4.2}
\1 Let $R:=A \propto~K$ be the trivial ring extension of a domain $A$ by its quotient field $K$. Then $R$ is a pseudo-arithmetical ring.\\
\2 Let $A\subseteq B$ be an extension of rings and $R:=A \propto~B$. If $R$ is a pseudo-arithmetical ring, then so is $A$.
\end{thm}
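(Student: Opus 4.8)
\textbf{Plan for the proof of Theorem~\ref{sec:4.2}.}

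For part~\1, the plan is to take an arbitrary Gaussian polynomial $F = \sum (a_i, k_i) x^i \in R[x]$ and show that its content ideal $c(F)$ is locally principal; since $R = A\propto K$ is local at a single maximal ideal only when $A$ is local, I would first reduce to that case by localizing $A$ at a maximal (in fact any prime) ideal $p$ — note that $A_p \propto K = (A\setminus p \times 0)^{-1}R$ is again a trivial extension of a domain by its quotient field, and the Gaussian property passes to localizations, so it suffices to prove $c(F)$ is \emph{principal} when $A$ is local. The key dichotomy, exactly as in the proofs of Theorems~\ref{sec:2.1} and~\ref{sec:3.1}, is whether some $a_i \neq 0$ or all $a_i = 0$. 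If some $a_i\neq 0$, then $(a_i,k_i)$ may not be a unit (since $A$ need not be a field), but one can still argue: writing $f=\sum a_i x^i \in A[x]$, the polynomial $F$ being Gaussian over $R$ should force $f$ to be Gaussian over $A$ (project modulo $0\propto K$), hence — and here is where I would invoke the Loper–Roitman theorem \cite{LR} for domains — $c(f)$ is invertible, so $c(f) = aA$ is principal in the local domain $A$; then a direct computation using $aK = K$ should give $c(F) = aA \propto K = (a,0)R + (0,\ast)R = (a, k)R$ for a suitable $k$, i.e.\ principal. If instead all $a_i = 0$, then $c(F) = 0\propto (\sum K k_i) = 0\propto K$ (as soon as some $k_i\neq 0$), which equals $(0,1)R$, again principal; and $c(F)=0$ is trivially principal if $F=0$.

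The main obstacle I anticipate is the case ``some $a_i\neq 0$'' when $A$ is not a field: one must be careful that $F$ Gaussian over $R$ genuinely implies $f$ Gaussian over $A$, and then that the invertibility of $c(f)=aA$ in $A$ really does propagate to principality of $c(F)$ in $R$. The propagation step hinges on the identity $c(F) = c(f)\propto \big(c(f)K + (\sum K k_i)\big)$, and since $c(f)\neq 0$ we have $c(f)K = K$, so the second component collapses to all of $K$ regardless of the $k_i$; hence $c(F) = c(f)\propto K = aA\propto K$, and one checks $aA\propto K = (a, k_0)R$ where $k_0$ is chosen so that $(0,k_0)$ together with $(a,0)$ generate — in fact $(a,0)R = aA\propto aK = aA\propto K$ already since $aK=K$, so $c(F)=(a,0)R$ is principal. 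The delicate point is purely that $f$ is Gaussian: one uses that $c(fg)\propto(\cdots) = c(FG) = c(F)c(G) = (c(f)c(g))\propto(\cdots)$ for every $G$ lifting a given $g\in A[x]$, and reads off the first coordinate.

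For part~\2, the argument is immediate and mirrors Lemma~\ref{sec:2.2}: $A \cong R/(0\propto B)$, so I would show the pseudo-arithmetical property descends to this factor ring. Given a Gaussian polynomial $f\in A[x]$, lift it to $F\in R[x]$ via $a\mapsto(a,0)$; since the map $A[x]\to R[x]\to (R/(0\propto B))[x]$ is compatible with contents and the Gaussian condition, $F$ is Gaussian over $R$, hence $c(F)$ is locally principal, hence its image $c(f) = c(F) \bmod (0\propto B)$ is locally principal in $A$ — using that principality of ideals, and localization, are preserved under surjections. This completes the proof.
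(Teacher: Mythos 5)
There is a genuine gap, and it sits in the case that is the whole point of the theorem. In part \1 you split according to whether some $a_i\neq 0$ or all $a_i=0$. Your first case is essentially fine, and is even a reasonable alternative to the paper's route (the paper instead observes that $c(F)$ is then regular and quotes Lucas's theorem for Gaussian polynomials with regular content, while you project modulo $0\propto K$, apply Loper--Roitman to $f$ over the domain $A$, and compute $c(F)=c(f)\propto K=(a,0)R$ locally, using $aK=K$). But in the case $a_i=0$ for all $i$ your computation of the content ideal is simply wrong: since $(b,l)(0,k_i)=(0,bk_i)$, one has $R(0,k_i)=0\propto Ak_i$, not $0\propto Kk_i$, so $c(F)=0\propto\sum_i Ak_i$, where $\sum_i Ak_i$ is a finitely generated $A$-submodule of $K$. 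This ideal equals $(0,k)R=0\propto Ak$ for some $k$ exactly when that submodule is cyclic, which is not automatic; note also that $(0,1)R=0\propto A\neq 0\propto K$ unless $A=K$, so your identification $c(F)=0\propto K=(0,1)R$ fails twice. This is precisely where the Gaussian hypothesis must be used: the paper clears denominators ($ak_i\in A$), shows $f':=\sum ak_ix^i$ is Gaussian over $A$ by testing $F'=(a,0)F$ against polynomials $G$ with coefficients $(a'_i,0)$, invokes Loper--Roitman to get $c(f')A_p=a'A_p$, and then cancels the regular element $(a,0)$ from $(a,0)c(F)R_P=(0,a')R_P$ to conclude $c(F)R_P=(0,\frac{a'}{a})R_P$. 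Without an argument of this kind, the ``all $a_i=0$'' case --- exactly the source of the non-regular Gaussian polynomials that make Theorem~\ref{sec:4.2} interesting, cf. Remark~\ref{sec:4.4.1}\1 --- is unproved.

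Part \2 also has a gap. You lift $f$ to $F=\sum(a_i,0)x^i$ and assert that $F$ is Gaussian over $R$ because the composite $A[x]\to R[x]\to \big(R/(0\propto B)\big)[x]$ is compatible with contents; that reasoning is backwards, since Gaussianity descends to quotient rings but does not ascend along the inclusion $A\subseteq R$. With your lift one would need the module-content identity $c(fh)=c(f)c(h)$ for all $h$ with coefficients in $B$, which does not follow from $f$ being Gaussian over $A$. The paper sidesteps this by lifting into the square-zero part, $F:=\sum(0,a_i)x^i$: then $(0,a_i)(a'_j,b_j)=(0,a_ia'_j)$ gives $c(FG)=0\propto c(fg)=0\propto c(f)c(g)=c(F)c(G)$ by a one-line computation, so $F$ is Gaussian, and local principality of $c(F)=0\propto c(f)$ then descends to $c(f)$ in $A$ as you indicate.
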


\begin{proof}
\1 Let $F:=\sum (a_{i},k_{i})x^{i}$ be a
nonzero Gaussian polynomial in $R[x]$. Assume $a_{i} \not= 0$ for some $i$. Then $(a_{i},k_{i})$ is regular in $R$ and so is $c(F)$ in $R$. Hence the
Gaussian property forces $F$ to be regular in $R[x]$. Then $c(F)$ is locally principal by \cite[Theorem 6]{Lu}. Next assume $a_{i} =0$ for each $i$.
Let $a$ be a nonzero element of $A$ such that $ak_{i}\in A$ for each $i$ and set $F':=(a,0)F=\sum (0,ak_{i})x^{i}$ in $R[x]$. We claim that $f'
:=\sum ak_{i}x^{i}$ is a (nonzero) Gaussian polynomial of $A[x]$. Indeed, consider $g =\sum a'_{i}x^{i} \in A[x]$ and set $G :=\sum (a'_{i},0)x^{i}$
in $R[x]$. Then $0 \propto c(f'g) =c(F'G) =c(F')c(G)$. Moreover, $c(F') =\sum R(0,ak_{i})=0 \propto c(f')$ and $c(G)=c(g)\propto K$ (see proof of
Lemma~\ref{sec:2.2}). It follows that $0 \propto c(f'g) =0 \propto c(f')c(g)$ and hence $c(f'g)=c(f')c(g)$. Whence $c(f')$ is locally principal since
$A$ is a domain \cite{LR}. Let $P:=p\propto K\in \Max(R)$ for some maximal ideal $p$ of $A$ and set $S:=(A\setminus p) \times 0\subseteq R\setminus P$. Since $c(f')A_{p} =a'A_{p}$ for some $a' \in A$, we get\\
\(\begin{array}{lcl}
(a,0)c(F)R_{P}  &=   &c(F')R_{P} \\
                &=  &\big(0\propto c(f')\big)R_{P} \\
                &=  &\big(S^{-1}(0\propto c(f'))\big)R_{P} \\
                &=  &\big(0 \propto c(f')A_{p}\big)R_{P}\\
                &=  &\big(0 \propto a'A_{p}\big)R_{P}\\
                &=  &(0,a')R_{P} \\
                &=  &(a,0)(0,\frac{a'}{a})R_{P}.
\end{array}\)\\
Consequently, $c(F)R_{P}=(0,\frac{a'}{a})R_{P}$ since $(a,0)$ is regular in $R$. Thus $c(F)$
is locally principal and therefore $R$ is a pseudo-arithmetical ring.

\2 Let $f =\sum a_{i}x^{i}$  be a Gaussian polynomial over $A$ and set $F:=\sum (0,a_{i})x^{i}$. Let $G=\sum (a'_{i},b_{i})x^{i} \in R[x]$ and set
$g:=\sum a'_{i}x^{i}$ in $A[x]$. Since $f$ is Gaussian, we have $c(F)c(G)=(0\propto c(f))c(G)=0 \propto c(f)c(g)=0 \propto c(fg)$. On the other hand,
one can see that $c(FG) =0 \propto c(fg)$. Therefore, $c(FG) =c(F)c(G)$, hence $F$ is a Gaussian polynomial over $R$. So $c(F)=0 \propto I$ is a
locally principal ideal of $R$ where $I:=c(f)$. Now ape the proof of the arithmetical statement in Lemma~\ref{sec:2.2}, to get that $I$ is locally
principal, as desired.
\end{proof}

Obviously, a ring is arithmetical if and only if it is Gaussian and pseudo-arithmetical. In this context, note that Examples \ref{sec:2.5} \& \ref{sec:3.3} illustrate the failure of Theorem~\ref{sec:4.2}\1 for trivial ring extensions $R:=A \propto~E$ with $E\not=\qf(A)$.

\begin{example}\label{sec:4.4}\rm
Let $(A,M)$ be a local ring which is not a field and $E$ a nonzero vector space over $\frac{A}{M}$. Then $R:=A\propto~E$ is a Pr\"ufer ring which is not pseudo-arithmetical. Indeed, Theorem~\ref{sec:3.1} ensures that $R$ is a non-arithmetical total ring of quotients (hence Pr\"ufer). We claim that the polynomial $f:=(a,0)+(0,e)x$, where $a\not=0\in M$ and $e\not=0\in E$, is Gaussian but $c(f)$ is not principal in $R$. To see this, let $g\in R[x]$. If $g\notin(M\times~E)[x]$, then Gauss lemma ensures that $c(fg)=c(f)c(g)$ since $R$ is local with ideal maximal $M\propto~E$. Assume $g\in (M\times~E)[x]$. Then $ME=0$ yields $$c(f)c(g)=(a,0)c(g)=c((a,0)g)=c(fg).$$
 Now ape the proof of Theorem~\ref{sec:3.1}(3), to obtain that $c(f)$ is not principal, and therefore $R$ is not pseudo-arithmetical.
\end{example}

\begin{remark}\label{sec:4.4.1}\rm
\1 Now pick any non-Pr\"ufer domain $A$ with $K :=\qf(A)$ and consider the trivial extension $R:=A \propto~K$. Then by Corollary~\ref{sec:2.3}, $R$ is
not a Pr\"ufer ring (a fortiori, $R$ is not arithmetical). Moreover, there are plenty of non-regular Gaussian polynomials over $R$, e.g., $f:=\sum
(0,k_{i})x^{i}$. However, Theorem~\ref{sec:4.2} ensures that every Gaussian polynomial over $R$ has locally principal content ideal (i.e., $R$ is pseudo-arithmetical).

\2 Next we examine the Noetherian case. From \cite{HH}, a local ring $(R,M)$ is said to be approximately Gorenstein if $R$ is Noetherian and for every
integer $n>0$ there is an ideal $I\subseteq M^{n}$ such that $R/I$ is Gorenstein (e.g., any local Noetherian ring $(R,M)$ with the $M$-adic
completion $\hat{R}$ reduced). Heinzer and Huneke proved that every locally approximately Gorenstein ring is pseudo-arithmetical \cite[Theorem
1.5]{HH}. This result combined with \cite[Remark 1.6]{HH} asserts that Noetherianity has no direct effect on the pseudo-arithmetical notion even in
low dimension, in the sense that non-Gorenstein Artinian local rings are not pseudo-arithmetical. Finally, notice that the above example $R:=A
\propto~K$ is not Noetherian since it is not coherent by \cite[Theorem 2.8]{KM}.

\3 From \cite{G2}, a ring $R$ is called a PF ring if all principal ideals of $R$ are flat, or, equivalently, if $R$ is locally a domain \cite[Theorem
4.2.2(3)]{G1}. A ring $R$ is called a PP ring or a weak Baer ring if all principal ideals of $R$ are projective. In the class of Gaussian rings, the
PP and PF properties coincide, respectively, with the notions of semihereditary ring and ring with weak dimension at most 1. Clearly, note that the above example $R:=A\propto~K$ is not locally a domain.
\end{remark}

In view of Example~\ref{sec:4.4} and Remark~\ref{sec:4.4.1}, Figure~\ref{fig1} summarizes the relations between all these classes of rings where the implications are irreversible in general.

\begin{center}
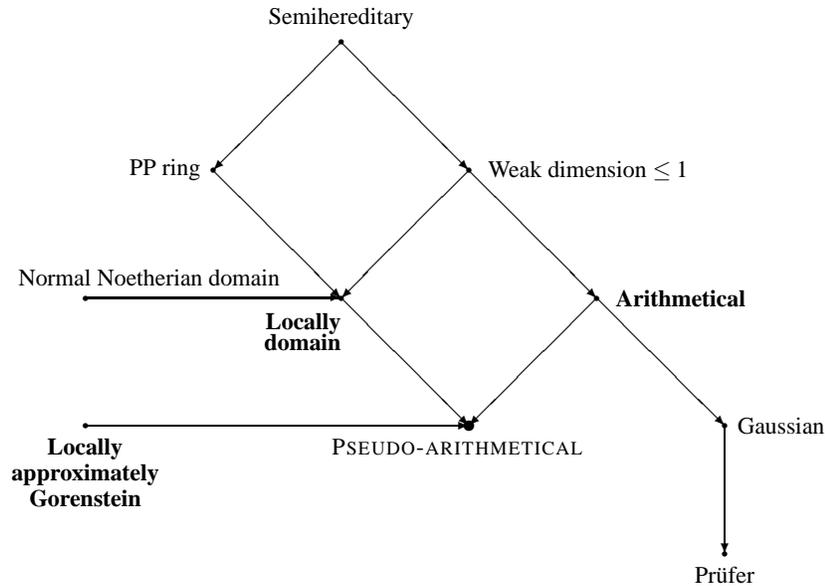
\begin{figure}[h]
\[\setlength{\unitlength}{.85mm}
\begin{picture}(100,85)(-35,-70)
\put(0,10){\vector(1,-1){20}} \put(0,10){\vector(-1,-1){20}} \put(-20,-10){\vector(1,-1){20}} \put(20,-10){\vector(1,-1){20}}
\put(20,-10){\vector(-1,-1){20}} \put(40,-30){\vector(1,-1){20}} \put(40,-30){\vector(-1,-1){20}} \put(0,-30){\vector(1,-1){20}}
\put(60,-50){\vector(0,-1){20}} \put(-40,-30){\vector(1,0){40}} \put(-40,-50){\vector(1,0){60}}
\put(0,10){\circle*{1}}\put(0,12){\makebox(0,0)[b]{\small Semihereditary}}

\put(-20,-10){\circle*{1}}\put(-22,-10){\makebox(0,0)[r]{\small PP ring}}

\put(-40,-30){\circle*{1}}\put(-30,-28){\makebox(0,0)[b]{\small Normal Noetherian domain}} \put(-40,-50){\circle*{1}} \put(-40,-52){\makebox(0,0)[t]{\small\bf
Locally}} \put(-40,-56){\makebox(0,0)[t]{\small\bf approximately}}\put(-40,-60){\makebox(0,0)[t]{\small\bf Gorenstein}}
\put(20,-10){\circle*{1}}\put(23,-10){\makebox(0,0)[l]{\small Weak dimension $\leq 1$}}

\put(0,-30){\circle*{1}} \put(0,-34){\makebox(0,0)[r]{\small\bf Locally}} \put(0,-37){\makebox(0,0)[r]{\small\bf domain}}

\put(40,-30){\circle*{1}} \put(43,-30){\makebox(0,0)[l]{\small\bf Arithmetical}} \put(20,-50){\circle*{1.5}} \put(18,-52){\makebox(0,0)[t]{\small\sc
Pseudo-arithmetical}} \put(60,-50){\circle*{1}} \put(62,-50){\makebox(0,0)[l]{\small Gaussian}} \put(60,-70){\circle*{1}}
\put(60,-72){\makebox(0,0)[t]{\small Pr\"ufer}}
\end{picture}\]
\caption{ Pseudo-arithmetical rings in perspective}\label{fig1}
\end{figure}
\end{center}

From the above discussion, it turns out that the pseudo-arithmetical notion must have a characterization that accommodates the three disparate
classes of arithmetical rings, of locally domains, and of locally approximately Gorenstein rings (see Figure~\ref{fig1}). This new characterization will offer a ``happy end"
to the Kaplansky-Tsang-Glaz-Vasconcelos conjecture. In this vein, we conjecture the following:

\begin{conjecture}\label{sec:4.5}
A ring $R$ is pseudo-arithmetical if and only if the zero ideal is locally irreducible.
\end{conjecture}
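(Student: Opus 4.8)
The plan is to analyze the two implications separately, reducing everything to the local case first. Since the Gaussian condition is local and ``locally principal content ideal'' is local, the pseudo-arithmetical property is itself local: $R$ is pseudo-arithmetical if and only if $R_M$ is pseudo-arithmetical for every maximal ideal $M$. Likewise, the zero ideal of $R$ is locally irreducible precisely when $0$ is irreducible in each $R_M$. So one may assume $(R,M)$ is local throughout, and the task becomes: $0$ is an irreducible ideal of $R$ if and only if every Gaussian polynomial over $R$ has principal content.

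For the direction ``$0$ irreducible $\Rightarrow$ pseudo-arithmetical,'' I would first dispense with the case where $R$ is a domain (then $0$ is trivially irreducible and the conclusion is the theorem of Loper--Roitman \cite{LR}), and then treat the general local ring with nonzero zerodivisors. The idea is that if $f=\sum a_ix^i$ is Gaussian over the local ring $R$ with content $c(f)\subseteq M$ (the case $c(f)=R$ being Gauss's lemma), one wants to show $c(f)$ is principal. The engine should be the same circle of ideas behind Theorem~\ref{sec:4.2}\1 and \cite[Theorem 6]{Lu}: passing to a ``regularization'' of $f$ by multiplying by a regular element, or localizing away from the nonregular locus, one reduces to Lucas's regular-content case; the irreducibility of $0$ is what guarantees that this reduction does not lose information, because it forces the annihilators encountered to line up. Concretely, I expect one shows that $\operatorname{Ann}(c(f))$ is prime (or that $R/\operatorname{Ann}(c(f))$ embeds suitably), using that an irreducible zero ideal forces $R$ to have few associated primes, and then runs the domain argument on the appropriate quotient or sub.

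For the reverse direction ``pseudo-arithmetical $\Rightarrow$ $0$ locally irreducible,'' I would argue contrapositively and locally: if $0=J_1\cap J_2$ with $J_1,J_2$ nonzero ideals of the local ring $R$, pick $0\neq e_i\in J_i$, so $e_1e_2=0$ but $e_1,e_2\neq 0$, and build a candidate bad Gaussian polynomial — the natural guess, modeled on Example~\ref{sec:4.4}, is $f:=e_1+e_2x$ (or a variant $ae_1+e_2x$ with $a\in M$), show it is Gaussian because the cross term $e_1e_2$ vanishes and any multiplier outside $M[x]$ is handled by Gauss's lemma, and then show $c(f)=Re_1+Re_2$ is not principal in $R$, precisely because $Re_1\cap Re_2$ meeting in $0$ obstructs one generator absorbing the other (this is exactly the computation aped from Theorem~\ref{sec:3.1}(3)). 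The hard part will be this direction in full generality: making $f=e_1+e_2x$ actually Gaussian requires controlling $c(f^k)=c(f)^k$ for all $k$, i.e.\ showing $(Re_1+Re_2)^k$ is generated by $e_1^k$ and $e_2^k$, which uses $e_1e_2=0$ but may need extra care to ensure no intermediate monomial $e_1^ie_2^{k-i}$ with $0<i<k$ survives and that the content equality with an arbitrary $g\in M[x]$ holds — so the real obstacle is pinning down the minimal hypothesis on $e_1,e_2$ (or on $R$) under which such an $f$ witnesses the failure of pseudo-arithmeticity, which is very likely why this remains a conjecture rather than a theorem.
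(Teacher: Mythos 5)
There is nothing in the paper to compare your argument against: the statement is posed as an open \emph{conjecture}, and the paper offers no proof of either implication --- Remark~\ref{sec:4.6} only checks that the ``if'' direction would be consistent with (and would recover) the Loper--Roitman and Heinzer--Huneke theorems. Your text, by its own admission, is a strategy sketch rather than a proof, and the gaps are genuine. In the direction ``$0$ locally irreducible $\Rightarrow$ pseudo-arithmetical'' you only say you \emph{expect} that irreducibility of the zero ideal makes the annihilator of $c(f)$ prime and permits a reduction to Lucas' regular-content theorem; no argument is given, and this is precisely the open content of the conjecture. Even your preliminary reduction to the local case is unjustified in the ``only if'' direction: a Gaussian polynomial over $R_M$ need not lift to a Gaussian polynomial over $R$ (clearing denominators preserves the polynomial but not the Gaussian property over $R$), so it is not clear that pseudo-arithmetical passes to localizations.

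The concrete failure is in your proposed witness for ``pseudo-arithmetical $\Rightarrow$ $0$ locally irreducible.'' The relation $e_1e_2=0$ alone does not make $f=e_1+e_2x$ Gaussian; in Example~\ref{sec:4.4} the computation uses $ME=0$, which kills \emph{all} products of non-unit coefficients, not just the single cross term. Take $R:=k[u,v]/(uv)$ localized at $(u,v)$, with $e_1=u$, $e_2=v$, so $(u)\cap(v)=0$. For $f=u+vx$ and $g=v+ux$ one has $fg=(u^2+v^2)x$, hence $c(fg)=(u^2+v^2)\subsetneq (u^2,v^2)=(u,v)^2=c(f)c(g)$ (one checks $u^2\notin(u^2+v^2)$), so $f$ is not Gaussian and witnesses nothing. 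Worse for the strategy: this $R$ is approximately Gorenstein (the ideals $(u^n-v^n)\subseteq M^n$ have Gorenstein, indeed complete intersection, quotients), hence pseudo-arithmetical by Heinzer--Huneke \cite[Theorem 1.5]{HH}, even though $0=(u)\cap(v)$ is reducible. Thus in this ring \emph{no} Gaussian polynomial with non-principal content exists at all, so no construction along your lines can establish the ``only if'' implication; in fact this example indicates that that half of the conjecture itself fails as stated for positive-dimensional rings and would at least have to be reformulated (say, for zero-dimensional rings or total rings of quotients) before a proof is attempted.
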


\begin{remark} \label{sec:4.6}\rm

\1  Fuchs, Heinzer and Olberding have recently studied irreducibility in commutative rings
\cite{FHO1,FHO2} and noticed that ``it is readily seen that a ring $R$ is an arithmetical ring if and only if for each proper ideal
$I$ of $R$, $I_{M}$ is an irreducible ideal of $R_{M}$ for every maximal ideal $M$ of $R$ containing $I$ \cite{FHO1}."

\2 Assume that Conjecture~\ref{sec:4.5} is true. If $R$ is locally a domain or locally approximately Gorenstein, then a polynomial over $R$ is Gaussian if and only if its content is locally principal {\cite[Theorem 4]{LR} \& \cite[Theorem 1.5]{HH}}. In particular, a nonzero polynomial over an integral domain is Gaussian if and only if its content is invertible.
Indeed the locally domain statement follows from the obvious fact that the zero ideal in a domain is irreducible. Next assume $R$ is locally approximately Gorenstein. Recall that a  Gaussian polynomial $f:=\sum a_{i}x^{i}$ over a ring $R$ forces its image
$\overline{f}:=\sum \overline{a_{i}} x^{i}$ to be Gaussian over $R/I$, for every ideal $I$ of $R$. Using this fact and the fact that the Gaussian
condition is a local property, in combination with the definition of a locally approximately Gorenstein ring, Heinzer and Huneke showed that the
proof reduces to the case where $R$ is a zero-dimensional local Gorenstein ring (see the beginning of the proof of \cite[Theorem 1.5]{HH}). But in
this setting the zero ideal is irreducible, as desired.
\end{remark}


\end{document}